\documentclass[12pt]{article}
\usepackage{amsmath,amssymb,amsthm,graphics,color}
\bibliographystyle{habbrv}

\title{Discrete Low-Discrepancy Sequences}
\author{{\ \ \ } \and Omer Angel \and Alexander E. Holroyd \and {\ \ \ \ }
\and James B.\ Martin \and James Propp}
\date{2 July 2010}

\newtheorem{thm}{Theorem}
\newtheorem{cor}[thm]{Corollary}
\newtheorem*{thm1'}{Theorem 1$'$}

\theoremstyle{definition}

\newtheorem*{remark*}{Remark}

\newcommand{\calS}{{\cal S}}
\newcommand{\Z}{\mathbb{Z}}
\newcommand{\R}{\mathbb{R}}
\newcommand{\dof}{\bf\boldmath}

\begin{document}
\maketitle

\begin{abstract}
Holroyd and Propp used Hall's marriage theorem to show that,
given a probability distribution $\pi$ on a finite set $\calS$,
there exists an infinite sequence $s_1,s_2,\dots$ in $\calS$
such that for all integers $k \geq 1$ and all $s$ in $\calS$,
the number of $i$ in $[1,k]$ with $s_i = s$ differs from $k \,
\pi(s)$ by at most 1. We prove a generalization of this result
using a simple explicit algorithm. A special case of this
algorithm yields an extension of Holroyd and Propp's result to
the case of discrete probability distributions on infinite
sets.  [Note added in 2010: Since writing and posting this
article on the arXiv, we have learned that both Theorem 1
and Theorem 2 are in the literature; see the articles
by Tijdeman that have been added to the bibliography.]
\end{abstract}

\renewcommand{\thefootnote}{}
\footnotetext{{\bf\hspace{-6mm}Key words:} rotor router;
discrepancy; quasi-random sequence; low-discrepancy sequence}
\footnotetext{{\bf\hspace{-6mm}2000 Mathematics Subject
Classifications:} 82C20; 20K01; 05C25}
\footnotetext{\hspace{-6mm}OA funded in part by NSERC.
  AEH funded in part by Microsoft Research and NSERC.
  JBM supported by an EPSRC Advanced Fellowship. 
  JP funded in part by an NSF grant.}
\renewcommand{\thefootnote}{\arabic{footnote}}

Recently there has been an upsurge of interest in non-random
processes that mimic interesting aspects of random processes,
where the fidelity of the mimicry is a consequence of
discrepancy constraints built into the constructions (for a
general survey of discrepancy theory, see~\cite{chazelle}).  A
recent example is the work of Friedrich, Gairing and
Sauerwald~\cite{friedrich-gairing-sauerwald} on load-balancing;
other examples, linked by their use of the ``rotor-router
mechanism'', are the work of Cooper, Doerr, Friedrich, Spencer,
and Tardos
~\cite{cooper-doerr-friedrich-spencer,cooper-doerr-spencer-tardos-0,
cooper-doerr-spencer-tardos,cooper-spencer,doerr-friedrich} on
derandomized random walk on grids (``$P$-machines''), the work
of Landau, Levine and Peres
~\cite{landau-levine,levine-peres-3,levine-peres,levine-peres-2}
on derandomized internal diffusion-limited aggregation on grids
and trees, and the work of Holroyd and
Propp~\cite{holroyd-propp} on derandomized Markov chains. Here
we focus on derandomizing something even more fundamental to
probability theory: the notion of an independent sequence of
discrete random variables.

Given a discrete probability distribution $\pi(\cdot)$ on a set
$\calS$, the associated i.i.d.\ process satisfies the law of
large numbers.  That is, if we choose $S_1,S_2,\dots$ from
$\calS$ independently at random in accordance with $\pi$, the
random variables $N_k (s) := \#\{i: 1 \leq i \leq k \
\mbox{and} \ S_i = s\}$ have the property that $N_k (s) / k
\rightarrow \pi(s)$ almost surely as $k\to\infty$, and indeed
the discrepancies $N_k (s) - k \, \pi(s)$ are typically
$O(\sqrt{k})$.  A derandomized analogue of an i.i.d.\ process
should have the property that $N_k (s) - k \, \pi(s)$ is
$o(k)$, and derandomized processes with $|N_k (s) - k \,
\pi(s)|$ as small as possible are especially interesting. It is
too much to ask that the unscaled differences $N_k (s) - k \,
\pi(s)$ themselves go to zero (since $N_k (s)$ is always an
integer while $k \, \pi (s)$ typically is not), but we can ask
that these differences stay bounded.

Holroyd and Propp \cite{holroyd-propp} used such derandomized
i.i.d.\ processes (``low-discrepancy stacks'', in their
terminology) in order to make their theory applicable to Markov
chains with irrational transition probabilities. Indeed, the
following theorem appears (with slightly different notation) as
Proposition 11 in \cite{holroyd-propp}.

\begin{thm}[Low-discrepancy sequences for i.i.d.\
processes; \cite{holroyd-propp}]\label{hp} Given a probability
distribution $\pi$ on a finite set $\calS$, there exists an
infinite sequence $s_1,s_2,\dots$ in $\calS$ such that for all
$k \geq 1$ and all $s$ in $\calS$, the number of $i$ in $[1,k]$
with $s_i = s$ differs from $k \, \pi(s)$ by at most 1.
\end{thm}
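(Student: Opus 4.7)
The plan is to construct the sequence $(s_k)$ by an explicit greedy algorithm and then verify the discrepancy bound by induction on $k$. Set $N_0(s) := 0$ for every $s \in \calS$. Having chosen $s_1, \dots, s_k$, let
\[
  d_k(s) := k\pi(s) - N_k(s), \qquad \phi_k(s) := (k+1)\pi(s) - N_k(s) = d_k(s) + \pi(s),
\]
so that $\sum_s \phi_k(s) = 1$ (with $\phi_0 = \pi$). Define
\[
  s_{k+1} \in \arg\max_{s \in \calS} \phi_k(s),
\]
breaking ties by any fixed total order on $\calS$. One readily computes the update $\phi_{k+1}(s) = \phi_k(s) + \pi(s) - [s = s_{k+1}]$ and the compact identity $d_{k+1}(s) = \phi_k(s) - [s = s_{k+1}]$.

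I claim $|d_k(s)| \leq 1$ for every $k \geq 0$ and $s \in \calS$; the base case is trivial. Assume the inequality for all times up to $k$, write $s^* := s_{k+1}$, and argue case by case. First, $d_{k+1}(s^*) = \phi_k(s^*) - 1 > -1$ because $s^*$ is the argmax of quantities summing to $1$, so $\phi_k(s^*) \geq 1/|\calS| > 0$. Second, $d_{k+1}(s^*) \leq \pi(s^*) \leq 1$, since $d_k(s^*) \leq 1$ gives $\phi_k(s^*) \leq 1 + \pi(s^*)$. Third, for $s \neq s^*$ one has $d_{k+1}(s) = \phi_k(s) = d_k(s) + \pi(s) > -1 + \pi(s) > -1$. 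The substantive remaining bound is $d_{k+1}(s) = \phi_k(s) \leq 1$ for $s \neq s^*$; since $s^*$ is the argmax, this reduces to proving the auxiliary invariant that at most one $s \in \calS$ satisfies $\phi_k(s) > 1$ at any given time.

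\textbf{Main obstacle.} Verifying this ``at-most-one-overflow'' invariant is the crux. The sum identity $\sum_s \phi_k(s) = 1$ combined with the coordinate lower bound $\phi_k(s) > \pi(s) - 1$ (from $d_k(s) > -1$) already rules out two simultaneous overflows whenever $|\calS| \leq 3$: two coordinates exceeding $1$ would force the remaining ones collectively below their algebraic floor. For $|\calS| \geq 4$ the sum identity alone is insufficient, and I would argue by a minimal-counterexample scheme: if a violating state is first reached at step $k$, then the greedy selection rule together with the update forces a violating configuration already at step $k-1$ (via case analysis on whether each overflowing coordinate was the one just selected), ultimately contradicting the trivially valid starting state $\phi_0 = \pi$. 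Handling ties and equality cases in this backward argument is the most delicate bookkeeping, but the conservation law $\sum_s \phi_k(s) = 1$ together with the paired bounds $\pi(s) - 1 < \phi_k(s) < 1 + \pi(s)$ keep the analysis tractable.
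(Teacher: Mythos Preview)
Your algorithm is not the paper's. You set $s_{k+1}=\arg\max_s \phi_k(s)$ with $\phi_k(s)=(k+1)\pi(s)-N_k(s)$; the paper instead proves the more general Theorem~\ref{main} (of which Theorem~\ref{hp} is the special case $\pi_i\equiv\pi$) via an \emph{earliest-deadline-first} rule: among candidates $s$ with $\phi_k(s)>0$, choose the one minimizing the first $k'$ at which $k'\pi(s)-N_k(s)\geq 1$. In the i.i.d.\ setting that deadline is essentially $\lceil(N_k(s)+1)/\pi(s)\rceil$, which is not monotone in $\phi_k(s)$, so the two selection rules genuinely differ.

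There is a real gap exactly where you flag the ``main obstacle.'' Three of the four bounds on $d_{k+1}$ are indeed routine, but the remaining one, $\phi_k(s)\leq 1$ for $s\neq s^*$, rests entirely on the invariant ``at most one coordinate of $\phi_k$ exceeds $1$,'' and this you do not prove. The backward minimal-counterexample scheme does not close as described. Suppose the first double overflow occurs at time $k+1$ with $\phi_{k+1}(a),\phi_{k+1}(b)>1$ and $s_{k+1}=a$; then $\phi_k(a)>2-\pi(a)$ and $\phi_k(b)>1-\pi(b)$, and the hypothesis at time $k$ says only that $a$ is the unique overflow there. One step further back, if $s_k\neq a$ then $\phi_{k-1}(a)=\phi_k(a)-\pi(a)>2-2\pi(a)$, which need not exceed $1$ when $\pi(a)>\tfrac12$; no earlier double overflow is forced and the descent stalls. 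The sum constraint $\sum_s\phi_k(s)=1$ and the coordinate bounds $\pi(s)-1<\phi_k(s)<1+\pi(s)$ you cite do not rule this configuration out for $|\calS|\geq 4$. You may be able to rescue the argmax-$\phi$ approach with a sharper invariant or a genuine multi-step backward analysis, but as written the decisive step is a plan rather than a proof.

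The paper sidesteps this difficulty altogether: its key inequality is $\sum_s\lfloor P_{k'}(s)-N_k(s)\rfloor^{+}\leq k'-k$ for all $k'>k$, and because the algorithm always selects the earliest deadline, this propagates forward in a clean two-case induction, with no backward tracing and no case split on the size of the probabilities.
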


Here we give a proof of this result that is simultaneously
simpler and more constructive than Holroyd and Propp's, and
applies even when the set $\calS$ is infinite. Furthermore, our
construction gives a simple way to derandomize sequences of
discrete random variables that are independent but not
identically distributed.

\begin{thm}[Low-discrepancy sequences for independent
processes]\label{main} Given discrete probability distributions
$\pi_1, \pi_2, \dots$ on some countable set
$\calS$, there exists an infinite sequence $s_1, s_2, \dots$ in
$\calS$ such that for all $k \geq 1$ and all $s$ in $\calS$,
the quantities $N_k (s) := \#\{i: 1 \leq i \leq k \text{ and }
\ s_i = s\}$ and $P_k (s) := \sum_{i=1}^k \pi_i (s)$ differ in
absolute value by strictly less than 1.
\end{thm}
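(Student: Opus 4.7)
I would construct the sequence online by a greedy rule. Writing $D_k(s) := P_k(s) - N_k(s)$, the target invariant is $|D_k(s)| < 1$ for all $k$ and $s$; note that $D_0 \equiv 0$ and $\sum_s D_k(s) = 0$ throughout. A natural rule is to take $s_k$ at each step to be a state attaining $\sup_s v_s$, where $v_s := D_{k-1}(s) + \pi_k(s)$, with some specified tie-breaking. The supremum is attained even when $\calS$ is infinite: only finitely many states have ever been picked, so $v_s = P_k(s) \ge 0$ outside a finite set, and $\sum_s v_s = 1$ forces $v_s \to 0$. The updates are $D_k(s_k) = v_{s_k} - 1$ and $D_k(s) = v_s$ for $s \neq s_k$, so $\sum_s D_k(s) = 0$ is preserved.

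The inductive step at each $k$ requires four bounds, three of which are immediate. For $s \neq s_k$, the lower bound $D_k(s) > -1$ follows from $D_{k-1}(s) > -1$ and $\pi_k(s) \geq 0$. For $s = s_k$, the upper bound $D_k(s_k) < 1$ follows from $D_{k-1}(s_k) < 1$ and $\pi_k(s_k) \leq 1$. The lower bound $D_k(s_k) > -1$ reduces to $v_{s_k} > 0$, which holds because $\sum_s v_s = 1 > 0$ forces the maximum to be strictly positive.

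The substantive inequality is $D_k(s) = v_s < 1$ for $s \neq s_k$. Since $v_s \leq v_{s_k}$ by the argmax rule, this fails only if two distinct states are tied for the maximum at a value $\geq 1$. To preclude this, I would strengthen the inductive hypothesis to include control on the total positive excess $L_k := \sum_s [D_k(s)]_+$, aiming for $L_k < 1$ throughout. Such a bound immediately gives $D_k(s_1) + D_k(s_2) < 1$ for any two distinct states, and combined with $\pi_{k+1}(s_1) + \pi_{k+1}(s_2) \leq 1$ it rules out two $v$-values simultaneously reaching $1$ at the next step, so the argmax (together with the tie-breaking) picks out the correct single state unambiguously.

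The main obstacle is verifying that the greedy rule, with a suitably chosen tie-breaking, preserves $L_k < 1$. The case $v_{s_k} \geq 1$ is handled cleanly by the subadditivity $[x+y]_+ \leq x_+ + y_+$: together with $\sum_s \pi_k(s) = 1$ it gives $\sum_s [v_s]_+ \leq L_{k-1} + 1$, and since $L_k = \sum_s [v_s]_+ - 1$ in this case, one obtains $L_k \leq L_{k-1} < 1$ directly. The delicate case is $v_{s_k} \in (0,1)$: here $L_k = \sum_{s \neq s_k} [v_s]_+$, and bounding it strictly below $1$ requires exploiting the argmax property of $s_k$ in combination with the identity $\sum_s v_s = 1$, keeping the positive excess concentrated on a single state rather than letting it spread. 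Pinning down the tie-breaking rule that makes this step go through, and closing the induction, is the crux of the argument; once this is done, the extension from finite to countable $\calS$ introduces no new difficulty, since the greedy rule is already well-defined in that generality.
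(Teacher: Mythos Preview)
Your greedy rule—choose $s_k$ to maximize $v_s = D_{k-1}(s) + \pi_k(s)$—cannot be made to work, and the gap you flag (``pinning down the tie-breaking'') is not closable within this framework. The rule uses only $D_{k-1}$ and $\pi_k$; but for Theorem~\ref{main} the choice of $s_k$ must in general depend on future distributions $\pi_{k+1},\pi_{k+2},\ldots$, as the paper itself remarks after the proof.

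Here is a concrete failure. Take $\calS=\{a,b,c,d\}$, $\pi_1=(\tfrac12,\tfrac12,0,0)$, $\pi_2=(0,0,\tfrac12,\tfrac12)$. Step~1 forces $s_1\in\{a,b\}$; say $s_1=a$, so $D_1=(-\tfrac12,\tfrac12,0,0)$. At step~2 your rule gives $v=(-\tfrac12,\tfrac12,\tfrac12,\tfrac12)$, a three-way tie among $b,c,d$; whichever you pick, $L_2=1$, so your proposed invariant $L_k<1$ is simply not maintainable. Worse, for each of the three choices an adversary can select $\pi_3$ to force failure: if $s_2=b$, take $\pi_3=(0,0,\tfrac12,\tfrac12)$, giving $v=(-\tfrac12,-\tfrac12,1,1)$ at step~3, and then \emph{every} choice of $s_3$ yields some $|D_3(s)|\ge 1$ (picking $a$ or $b$ gives $D_3=-\tfrac32$ there; picking $c$ or $d$ leaves the other at $D_3=1$). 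Symmetric choices of $\pi_3$ defeat $s_2=c$ and $s_2=d$. Thus no tie-breaking based on the information available at time~2 can succeed.

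The paper's algorithm is not an $\arg\max$ over $v_s$ but \emph{earliest-deadline-first}: among states with $v_s>0$, pick the one minimizing the smallest future $k'$ for which $P_{k'}(s)-N_{k-1}(s)\ge 1$. This looks ahead into $(\pi_i)_{i\ge k}$, and the invariant carried through the induction is not a bound on $L_k$ but the two-parameter family $\sum_s \lfloor P_{k'}(s)-N_k(s)\rfloor^+ \le k'-k$ for all $k'>k$. In the example above, with $\pi_3=(0,0,\tfrac12,\tfrac12)$ the deadlines at step~2 are $3$ for $c$ and $d$ but later for $b$, so the paper's rule correctly avoids $b$. Your $\arg\max$ heuristic may well suffice in the i.i.d.\ setting of Theorem~\ref{hp}, where the ``future'' is already determined, but for varying $\pi_i$ it is the wrong greedy.
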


Theorem \ref{hp} is the special case of Theorem \ref{main} in
which $\calS$ is finite and $\pi_1 = \pi_2 = \dots = \pi$, with
an infinitesimally weaker inequality in the conclusion.

\begin{remark*}
  \sloppypar For every choice of $s_1,s_2,\dots$, we have $\sum_s N_k (s) =
  k = \sum_s P_k (s)$, whence $\sum_s (N_k (s) - P_k (s)) = 0$. Moreover,
  if we were to choose $S_1,S_2,\dots$ independently from $\calS$ in
  accordance with the respective probability distributions $\pi_i$, then
  the expected number of $i$ in $[1,k]$ with $S_i = s$ would be
  $\sum_{i=1}^k \pi_i (s)$, so the expected value of $N_k (s) - P_k (s)$
  would be zero for each $s$.
\end{remark*}

\noindent
{\sc Note:} It has come to our attention that Theorem 2 is not new;
the same result (with the same proof) was discovered 
by Tijdeman in the 1970s (see~\cite{T1} and~\cite{T2}).
Indeed, Tijdeman's result is slightly stronger than ours
in the case where $\calS$ is finite (he obtains the
optimal constant, which is slightly smaller than 1
and depends on the size of $\calS$).

\begin{proof}[Proof of Theorem \ref{main}]
We present an algorithm for determining the sequence $(s_k)$.  
Our algorithm is as follows. Given $s_1,\dots,s_k$ (with $k
\geq 0$), let $s_{k+1}$ be the candidate $s$ with the earliest
deadline, where we say $s$ is a {\dof candidate} (for being the
$(k+1)$st term) if $N_k (s) - P_{k+1} (s) < 0$, and where we
define the {\dof deadline} for such an $s$ as the smallest
integer $k' \geq k+1$ for which $N_k (s) - P_{k'} (s) \leq -1$.
Ties may be resolved in any fashion.

For $k \geq 0$ write $D_k(s) := N_k(s) - P_k(s)$ (note that
$D_0(s)=0$).  First observe that $s$ is a candidate if and only
if taking $s_{k+1} = s$ would lead to $D_{k+1}(s) < 1$;
equivalently, $s$ fails to be a candidate if and only if taking
$s_{k+1} = s$ would lead to $D_{k+1}(s) \geq 1$. That is, when
we are choosing the $(k+1)$st term of the sequence, $s$ fails
to be a candidate if and only if choosing $s_{k+1}$ to be $s$
would cause $s$ to be {\dof oversampled} from time 1 to time
$k+1$. It is clear that there is always at least one candidate,
since $\sum_s (N_{k} (s) - P_{k+1} (s)) = -1 < 0$.

Also note that if $s$ is a candidate with deadline $k'$, then
taking $s_{k+1},\dots,s_{k'}$ all unequal to $s$ would lead to
$D_{k'}(s) \leq -1$; that is, such an $s$ would be {\dof
undersampled} from time 1 to time $k'$ if it were not chosen to
be at least one of $s_{k+1},\dots,s_{k'}$.

For $k' > k \geq 0$ define $R_{k,k'}(s) := \lfloor P_{k'} (s) -
N_k (s) \rfloor ^+$ (where $x^+ := \max(x,0)$). Thus
$R_{k,k'}(s)$ is the minimal number of the terms $s_{k+1},\dots,s_{k'}$
that must be equal to $s$ in order to prevent $s$ from being
undersampled from time 1 to time $k'$. If $R_{k,k+1}(s) = 1$,
then $s_{k+1}$ must be chosen to equal $s$ in order to prevent
$s$ from being undersampled from time 1 to time $k+1$; we call
such an $s$ {\dof critical}. We will show by induction on
$k\geq 0$ that $\sum_s R_{k,k'}(s) \leq k'-k$ for all $k'>k$.
This implies in particular that $\sum_s R_{k,k+1}(s) \leq 1$
for all $k$, so that at each step at most one $s$ is critical;
this in turn implies that no $s$ is ever undersampled. And
since our procedure only chooses candidates, no $s$ is ever
oversampled either.

First, consider $k=0$: we have $D_0 (s) = 0$ and $R_{0,k'}(s) =
\lfloor P_{k'} (s) \rfloor$, so $\sum_s R_{0,k'}(s) = \sum_s
\lfloor P_{k'} (s) \rfloor \leq \lfloor \sum_s P_{k'} (s)
\rfloor = k' - 0$ as claimed.

Now take $k\geq 0$, and suppose for induction that $\sum_s
R_{k,k'}(s) \leq k'-k$ for some particular $k' > k+1$. We wish 
to show that $\sum_s R_{k+1,k'}(s) \leq k'-(k+1)$. There are 
two cases to consider.
First, if $R_{k,k'}(s_{k+1}) > 0$, then $R_{k+1,k'}(s_{k+1}) =
R_{k,k'}(s_{k+1})-1$ and $R_{k+1,k'}(s) = R_{k,k'}(s)$ for all
$s \neq s_{k+1}$, so $\sum_s R_{k+1,k'}(s) = (\sum_s
R_{k,k'}(s) ) - 1 \leq (k'-k) - 1 = k'-(k+1)$ as claimed.
Second, if $R_{k,k'}(s_{k+1}) = 0$, then the deadline for
$s_{k+1}$ is greater than $k'$. Since our algorithm chooses
$s_{k+1}$ as the $(k+1)$st term, $s_{k+1}$ must be the candidate
with the earliest deadline. This means that no $s \neq s_{k+1}$
in $\calS$ with $R_{k,k'} (s) > 0$ is a candidate; that is,
every $s \neq s_{k+1}$ with $R_{k,k'} (s) > 0$ must satisfy
$N_{k} (s) \geq P_{k+1} (s)$, and since $N_{k+1} (s) = N_{k}
(s)$, we must have $D_{k+1}(s) \geq 0$, implying $R_{k+1,k'}(s)
= \lfloor -D_{k+1}(s) + \sum_{i=k+2}^{k'} \pi_i(s) \rfloor ^+
\leq \lfloor \sum_{i=k+2}^{k'} \pi_i(s) \rfloor ^+  \leq \sum_{i=k+2}^{k'}
\pi_i(s)$. Likewise, for all $s$ with $R_{k,k'}(s) = 0$, we
have $R_{k+1,k'}(s) = 0$, implying $R_{k+1,k'}(s) \leq
\sum_{i=k+2}^{k'} \pi_i(s)$. Hence $\sum_s R_{k+1,k'}(s) \leq
\sum_s \sum_{i=k+2}^{k'} \pi_i(s) = \sum_{i=k+2}^{k'} \sum_s
\pi_i(s) = \sum_{i=k+2}^{k'} 1 = k'-(k+1)$, as claimed.
\end{proof}

In reading the proof, the reader may find it helpful to imagine
the following scenario. Let $\calS$ be a set of creatures, each
of which has a {\dof surplus} (or ``energy level'') that is
initially 0. At each step a single creature gets fed. At the
$k$th step, the surplus $D_k(s)$ of creature $s$ decreases by
$\pi_k(s)$, but in addition increases by $1$ (giving a net
change of $1 - \pi_k (s)$) if $s$ gets fed. After each step the
sum of the surpluses is zero. If a creature's surplus ever
falls to $-1$ or less, the creature dies of starvation; if its
surplus ever rises to $1$ or more, it dies of overfeeding. Our
strategy for keeping all the creatures alive is to always feed
the creature that if left unfed would die earliest of
starvation, excepting those that cannot be fed because they
would immediately die of overfeeding.

Independently of our work, in the context of a one-sided
version of the discrepancy-control problem arising from an
email post by John Lee~\cite{lee}, Oded Schramm and Fedja
Nazarov considered other algorithms for keeping the
quantities $k \, \pi(s) - N_k(s)$ from becoming too large in
the case where all the $\pi_i$ equal $\pi$. 

Considering the sequence of discrepancy vectors $(D_k)$ leads
to the following reformulation of Theorem~\ref{hp}.
\begin{cor}\label{alt}
For any probability vector $\pi \in \R^n$ there is a compact
$K\subseteq[-1,1]^n$ containing $(0,0,\dots,0)$ such that
$K\subseteq \bigcup_{i=1}^n (K + \pi - e_i)$, where
$\{e_1,\ldots,e_n\}$ is the standard basis of $\R^n$.
\end{cor}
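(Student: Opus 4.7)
The plan is to realize $K$ as the closure of the orbit of discrepancy vectors produced by a low-discrepancy sequence. Apply Theorem~\ref{hp} (or equivalently Theorem~\ref{main} with $\pi_i \equiv \pi$, which yields the stronger strict inequality) to obtain a sequence $s_1, s_2, \dots$ in $\{1,\dots,n\}$ such that the discrepancy vector $D_k \in \R^n$ defined by $D_k(j) := N_k(j) - k\,\pi(j)$ satisfies $D_k \in [-1,1]^n$ for every $k \geq 0$. A direct computation gives the recursion $D_{k+1} = D_k - \pi + e_{s_{k+1}}$, so the orbit $A := \{D_k : k \geq 0\}$ is contained in $[-1,1]^n$ and contains $D_0 = 0$.

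I would then take $K := \overline{A}$. This $K$ is compact (closed and bounded in $\R^n$), lies in $[-1,1]^n$, and contains the origin, so properties (a)--(c) of the statement are immediate. It remains to check the self-covering condition $K \subseteq \bigcup_{i=1}^n (K + \pi - e_i)$. Let $x \in K$ and choose $k_j$ with $D_{k_j} \to x$. For each $j$ set $i_j := s_{k_j+1}$, so that
\[
D_{k_j} - \pi + e_{i_j} \;=\; D_{k_j+1} \;\in\; A.
\]
Because the indices $i_j$ lie in the finite set $\{1,\dots,n\}$, pigeonhole lets me pass to a subsequence on which $i_j$ is constant, say equal to $i$. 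Then $D_{k_j} - \pi + e_i \to x - \pi + e_i$, and since each term of this sequence is in $A \subseteq K$, the limit lies in $K$ by closedness. Hence $x \in K + \pi - e_i$, as required.

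The only genuinely essential ingredient beyond Theorem~\ref{hp} is the pigeonhole step that forces a single $i$ to work in the limit; this is where finiteness of $\calS$ enters, and it is the one place where the argument would need reworking if one tried to extend the corollary to infinite $\calS$. Everything else -- compactness, the inclusion in $[-1,1]^n$, and the presence of $0$ -- is essentially bookkeeping about the orbit of the algorithm.
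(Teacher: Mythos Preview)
Your proof is correct and follows essentially the same route as the paper: take $K$ to be the closure of the discrepancy orbit $\{D_k:k\ge 0\}$ produced by Theorem~\ref{hp}. The only difference is cosmetic: where you use a pigeonhole/subsequence argument to pass the covering condition to the closure, the paper implicitly relies on the equivalent fact that $\overline{\bigcup_{i=1}^n (A+\pi-e_i)}=\bigcup_{i=1}^n(\overline{A}+\pi-e_i)$ for a finite union of translates.
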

To see that this implies Theorem \ref{hp}, note that, given $K$
and $D_k$ (with $k \geq 0$), we can choose $s_{k+1}=i$ such
that $D_k - \pi + e_i \in K$; this choice guarantees that the
discrepancy vector $D_{k+1}$ lies in $K$ and hence in
$[-1,1]^n$. Conversely, note that if we take the sequence $s_1,
s_2, \dots$ given by Theorem \ref{hp}, the bounded set $\{D_k:
\ k \geq 0\}$ satisfies all the conditions of Corollary
\ref{alt} except for compactness. Hence we can prove Corollary
\ref{alt} by taking $K$ to be the closure of $\{D_k: \ k \geq
0\}$.  Corollary \ref{alt} asserts the existence of a set $K$
containing the origin that is covered by translations of itself
by given vectors $v_i:=\pi-e_i$.  Since the zero vector is in
the convex hull of the $v_i$, a sufficiently large ball in the
subspace spanned by the $v_i$ achieves this, but Corollary
\ref{alt} provides the bound $K \subseteq [-1,1]^n$.

The constant 1 in Theorem \ref{main} cannot be improved;
that is, there is no $c < 1$
with the property that for all $\pi_1, \pi_2, \dots$ there
exists a way to choose $S_1,S_2,\dots$ from $\calS$ so that
the discrepancies $N_k (s) / k - \pi(s)$ all stay
within the interval $(-c,+c)$.
Consider for instance the case where
each $\pi_i$ is the uniform distribution on a finite set $\calS$ of
cardinality $n$, and take $k = n-1$.
There exists $s$ in $\calS$ distinct from $s_1,\dots,s_k$
and this $s$ satisfies $P_k (s) - N_k (s) = k/n - 0 = 1-1/n$.
Although this example shows that the constant 1 cannot be improved,
it is possible that there is a universal
strict subset $K$ of $(-1,+1) \times (-1,+1) \times \cdots$
with the property that for all $\pi_1, \pi_2, \dots$ there
exists a way of choosing $S_1,S_2,\dots$ from $\calS$ so that
the vector of discrepancies stays within the set $K$.

It is also possible that Theorem \ref{main} might be strengthened by
controlling the discrepancies between $N_k(s) - N_j(s)$ and
$\pi_{j+1} (s) + \pi_{j+2} (s) + \cdots + \pi_{k} (s)$ for all
$j,k$ with $1 \leq j \leq k$ and all $s$ in $\calS$. It is easy
to deduce from Theorem \ref{main} that every such discrepancy
has absolute value less than 2 (since it is just $D_k(s) - D_j(s)$),
but perhaps one can show that there exists a way of choosing $S_1,S_2,\dots$ 
so that every such discrepancy has absolute value less than 1.

In determining $s_k$, our algorithm typically requires knowledge 
of the future distributions $(\pi_i)_{i\geq k}$ 
(actually a finite but unbounded number of them).  
This is unavoidable, as may be seen by the following example.  
Let $\pi_k$ be uniform on $1,\ldots,5$ for each of $k=1,2,3$. 
Regardless of $s_1,s_2,s_3$ there will be two $i$'s with $N_3(i)-P_3(i)=-0.6$.
If $s_1,s_2,s_3$ are chosen in ignorance of $\pi_4$, 
it is possible for $\pi_4$ to be uniform on those two $i$'s.
Then any choice of $s_4$ will result in $N_4(i)-P_4(i)=-1.1$ for some $i$. 
With more than $5$ values the discrepancies can be even larger in magnitude.
It might be interesting to know how good a bound on discrepancy
can be achieved by algorithms that are constrained to have
$s_n$ depend only on $\pi_1,\dots,\pi_n$.
Of course, when the $\pi_i$'s are all equal as in Theorem~\ref{hp} 
this issue is nonexistent.

One way in which Theorem \ref{hp} might be strengthened is by
finding a construction that minimizes $\max_k \sum_{i<k}
f(s_i)$, where $f$ is some function on $\calS$ satisfying
$\sum_s f(s) \pi(s) = 0$. Sums of the form $\max_k \sum_{i<k}
f(s_i)$ play an important role in~\cite{holroyd-propp}; there
the elements $s$ of $\calS$ correspond to transitions $u
\rightarrow v$ in a Markov chain (with $u$ fixed and $v$
varying), $\pi(s)$ equals the transition probability $p(u,v)$,
and $f(s)$ equals $h(v)-h(u)$ where the function $h(\cdot)$ is
harmonic at $u$ (i.e., $\sum_v p(u,v) h(v) = h(u)$), implying
$\sum_s f(s) \pi(s) = 0$. For example, consider random walk on
$\Z^2$, where a vertex $u$ has four neighbors $u_N$, $u_S$,
$u_E$, and $u_W$ with $p(u_N) = p(u_S) = p(u_E) = p(u_W) =
\frac14$.  Key results in~\cite{holroyd-propp} treat
rotor-routers that rotate in the repeating pattern
$N,E,S,W,N,E,S,W,\dots$ and show that the resulting
rotor-router walks closely mimic certain features of the random
walk. However, since the relevant discrepancies are controlled
by quantities of the form $\max_k \sum_{i<k} f(s_i)$, and since
the function $h$ has the property that $h(u_N) + h(u_S)$ and
$h(u_E) + h(u_W)$ are close to $2h(u)$ (implying that $f(N) +
f(S)$ and $f(E) + f(W)$ are close to zero), there is reason to
think that rotor-routers that rotate in the repeating pattern
$N,S,E,W,N,S,E,W,\dots$ would give smaller discrepancy for the
quantities of interest.

As an important special case of Theorem \ref{hp}, 
suppose $\pi$ is rational. It is then
natural to ask that the sequence $s_1,s_2,\ldots$ be periodic
(so that one has a ``rotor'' in the sense
of~\cite{holroyd-propp}).  Our algorithm as described does not
guarantee periodicity, because we allowed ties to be broken
arbitrarily. However, if we add the stipulation that ties are
always broken in some pre-determined way depending only on the
deadlines and the discrepancies $D_k(s)$, then our algorithm
yields a periodic sequence, with period equal to the least
common multiple $m$ of the denominators of the rational numbers
$\pi(s)$.  Indeed, it is clear that the sequence generated
by the algorithm is eventually periodic,
and that the period cannot be less than $m$.
On the other hand, the construction gives $|N_m(s)-P_m(s)| <
1$; but $N_m(s)$ and $P_m(s)$ are both integers, so they must
be equal. Hence $D_m(s) = 0 = D_0(s)$ for all $s$, so the
procedure enters a loop at time $m$.

Theorem \ref{hp} can be rephrased as follows: for any sequence
of non-negative real numbers $\pi(1),\pi(2), \dots$ summing to
1, there exists a partition of the natural numbers into sets
$F_1,F_2,\dots$ where for all $i$ the set $F_i$ has density
$\pi(i)$, and $|F_i \cap \{1,\dots,k\}| - \pi(i) k$ lies in
$[-1,1]$ for all $k \geq 1$ (of course the second property
implies the first). An even stronger condition we might seek is
that the gap between the $m$th and $n$th elements of $F_i$ (for
all $i$ and all $n \geq m \geq 1$) is within 1 of
$(n-m)/\pi(i)$. If ``within 1'' is interpreted in the strict
sense (i.e., the difference is strictly less than 1), then this
condition cannot always be achieved; e.g., with
$\pi=(\frac12,\frac13,\frac16)$, the only way to satisfy the
condition would be to partition the natural numbers into three
arithmetic progressions with densities $\frac12$, $\frac13$,
and $\frac16$, which clearly cannot be done. However, if
``within 1'' is interpreted in the weak sense (i.e., the
difference is less than or equal to 1), then we do not know of
a counterexample.

\section*{Acknowledgments}

We thank the Pacific Institute for the Mathematical Sciences
and the Centre de Recherches Math\'ematiques for bringing the
authors together at the workshop {\it New Directions in Random
Spatial Processes} in May 2009, thereby making this research
possible.  We thank Fedja Nazarov, Oded Schramm, and Joel
Spencer for valuable discussions.

\bibliography{ahmp}

\bigskip

\noindent
{\sc Omer Angel:}
{\tt angel at math dot ubc dot ca} \\
{\sloppy Dept.\ of Mathematics, University of British Columbia,
Vancouver, BC, Canada.}

\vspace{3mm}
\noindent
{\sc Alexander E.\ Holroyd:}
{\tt holroyd at math dot ubc dot ca}\\
Microsoft Research, 1 Microsoft Way, Redmond WA, USA; and
\\ University of British Columbia, 121-1984 Mathematics Road,
Vancouver, BC, Canada.

\vspace{3mm}
\noindent
{\sc James B.\ Martin:}
{\tt martin at stats dot ox dot ac dot uk}\\
Department of Statistics, University of Oxford, 1 South Parks Road, Oxford, UK.

\vspace{3mm}
\noindent
{\sc James Propp:}
{\tt jpropp at cs dot uml dot edu}\\
University of Massachussets Lowell,
1 University Ave., Olney Hall 428, Lowell, MA,  USA.

\end{document}